\documentclass[11pt]{amsproc}
\usepackage{mathrsfs}
\usepackage{stmaryrd}
\usepackage{cases}
\usepackage{amssymb}
\usepackage{amsmath}
\usepackage{amsfonts}
\usepackage{graphicx}
\usepackage{amsmath,amstext,amsbsy,amssymb, color}
\newtheorem{theorem}{Theorem}[section]

\newtheorem{proposition}[theorem]{Proposition}

\theoremstyle{definition}
\newtheorem{definition}[theorem]{Definition}

\theoremstyle{remark}
\newtheorem{remark}[theorem]{Remark}

\numberwithin{equation}{section} \errorcontextlines=0

\newcommand{\Pf}{\mathrm{Pf}}

\newcommand{\cdet}{\mathrm{cdet}}
\newcommand{\rdet}{\mathrm{rdet}}

\newcommand{\ot}{\otimes}
\newcommand{\id}{\mathrm{id}}
\newcommand{\si}{\sigma}

\begin{document}

\title[Multiparameter quantum Pfaffians]
{Multiparameter quantum Pfaffians}
\author{Naihuan Jing}
\address{NJ: Department of Mathematics, Shanghai University,
Shanghai 200444, China and Department of Mathematics, North Carolina State University, Raleigh, NC 27695, USA}
\email{jing@math.ncsu.edu}
\author{Jian Zhang}
\address{JZ: Institute of Mathematics and Statistics, University of Sao Paulo, Sao Paulo, Brazil 05315-970}
\email{zhang@ime.usp.br}
\thanks{{\scriptsize
\hskip -0.6 true cm MSC (2010): Primary: 17B37; Secondary: 58A17, 15A75, 15B33, 15A15.
\newline Keywords: multiparameter quantum groups, $q$-determinants, $q$-Pfaffians, $q$-hyper-Pfaffians\\
Supported by NSFC (11531004), 
Fapesp (2015/05927-0) and Humboldt Foundation.}}

\begin{abstract}
The multiparameter quantum Pfaffian of the $(p, \lambda)$-quantum group is introduced and studied together with the quantum determinant,
and an identity relating the two invariants is given. Generalization to the multiparameter hyper-Pfaffian and relationship with
the quantum minors are also considered.
\end{abstract}
\maketitle
\section{Introduction}

In the early study of quantum groups, multiparameter quantum groups and quantum enveloping algebras
were considered along the line of one-parameter quantum groups \cite{T, S, R}. It was clear that many of their properties
are similar to their one-parameter analog, for example, Artin-Schelter-Tate \cite{AST} showed that the multiparameter
general linear quantum group has the same Hilbert function of the polynomial functions in $n^2$ variables under the
so-called $(p, \lambda)$-condition (see \eqref{e:cond-p}).
Further results have been established for two- and multi-parameter quantum groups \cite{Sc, BG, HLM, LS, JL}
such as the existence of the quantum determinant, which helps to transform quantum semigroups
into quantum groups.
Recently it is known that the quantum Pfaffians
can be extended to two-parameter quantum groups as well \cite{JZ2}.

In this paper, we generalize our recent study of quantum determinants and Pfaffians from
two-parameter quantum groups to multiparameter cases. We will adopt the same approach of quadratic algebras \cite{M} to study quantum
determinants and quantum Pfaffians, and express them as the scaling constants of
quantum differential forms (cf. \cite{JZ1}).
In particular, we will
prove that the multiparameter quantum Pfaffian can be defined for a more general class of multiparameter quantum matrices
and prove the identity between the quantum determinant and quantum Pfaffian, and also establish
their integrality property for quantum groups under the $(p, \lambda)$-conditions.

We also formulate the multiparameter quantum determinants in terms of the quasideterminant of the generating matrix.
Generalizing the one-form and two-form, we obtain higher degree analogs of the multiparameter Pfaffians and
establish the transformation rule of the multiparameter quantum hyper-Pfaffian under the quantum determinant,
which extends some of the transformation rules of Pfaffians in linear algebra.

\section{Quantum determinants}

\subsection{Quantum semigroup $\mathcal A$}

Let $p=(p_{ij}), q=(q_{ij})$ be two sets of $n^2$ parameters in the complex field $\mathbb C$ arranged in matrix forms satisfying the following relations:
\begin{align*}
p_{ij}p_{ji}=1,\quad p_{ii}=1;\quad q_{ij}q_{ji}=1,\quad q_{ii}=1.
\end{align*}
For a scalar $v$, the $v$-commutator $[x, y]_v$ is defined by
\begin{equation}
[x, y]_v=xy-vyx.
\end{equation}
Therefore two elements $x$ and $y$ are $q$-commutative if $[x, y]_{q_{ij}}=0$ 
for a parameter $q_{ij}$.

We define the unital algebra $\mathcal A$ as an associative complex algebra generated
by $a_{ij}$, $1\leqslant i, j\leqslant n$ subject to the following relations:
\begin{align}\label{relation1}
&a_{ik}a_{il}=q_{kl}a_{il}a_{ik}, \\ \label{relation2}
&a_{ik}a_{jk}=p_{ij}a_{jk}a_{ik}, \\ \label{relation3}
&a_{ik}a_{jl}-\frac{q_{kl}}{q_{ij}}a_{jl}a_{ik}=q_{kl}a_{il}a_{jk}-\frac{1}{q_{ij}}a_{jk}a_{il},\\ \label{relation4}
&a_{ik}a_{jl}-\frac{p_{ij}}{p_{kl}}a_{jl}a_{ik}=p_{ij}a_{jk}a_{il}-\frac{1}{p_{kl}}a_{il}a_{jk},
\end{align}
where $i<j$ and $k<l$. These can be paraphrased as that the
quantum matrix $A=(a_{ij})$ is row $q$-commutative, column $p$-commutative, and satisfies the equality between
the $q$-commutator (resp. $p$-commutator) of the diagonal entries and the $q$-commutator (resp. $p$-commutator)
of the skew diagonal entries up to a $q$-factor (resp. $p$-factor).

The algebra $\mathcal A$ has a bialgebra structure under the comultiplication
$\mathcal A\longrightarrow \mathcal A\ot\mathcal A$ given by
\begin{equation}
\Delta(a_{ij})=\sum_ka_{ik}\otimes a_{kj},
\end{equation}
and the counit given by $\varepsilon(a_{ij})=\delta_{ij}$, the Kronecker symbol.

For any permutation $\sigma$ in $S_n$, the $q$-inversion associated to the parameters $q_{ij}$ is defined as
\begin{equation}\label{e:qinv}
(-q)_{\sigma}=(-1)^{l(\sigma)}\prod_{\substack{i<j\\\si_i>\si_j}}q_{\sigma_{j}\sigma_{i}},
\end{equation}
where $l(\sigma)=|\{(i, j)|i<j, \si_i>\si_j\}|$ is the classical inversion number of $\sigma$.

The $v$-based quantum number is defined to be:
\begin{equation}
[n]_v=1+v+\cdots+v^{n-1},
\end{equation}
and the quantum factorial $[n]_v!=[1]_v[2]_v\cdots [n]_v$.

Let $A=(a_{ij})$ be the matrix with entries satisfying \eqref{relation1}-\eqref{relation4}.
We define the quantum row-determinant and column-determinant of $A$ as follows.
\begin{align}\label{e:qdet}
\rdet(A)&=\sum_{\sigma\in S_n}(-q)_{\sigma}a_{1,\sigma_1}\cdots a_{n,\sigma_n},\\ \label{e:qper}
\cdet(A)&=\sum_{\sigma\in S_n}(-p)_{\sigma}a_{\sigma_1,1}\cdots a_{\sigma_n,n}.
\end{align}
 The first property we show is that both are group-like elements:
\begin{align*}
\Delta(\rdet(A))&=\rdet(A)\otimes \rdet(A),\\
\Delta(\cdet(A))&=\cdet(A)\otimes \cdet(A).
\end{align*}

To do this we
introduce two copies of commuting
quantum exterior algebras associated to the parameters $p_{ij}$ and $q_{ij}$ respectively. The first one is
$$\Lambda_q(x)=\mathbb C\langle x_1, \ldots,x_n\rangle /I$$
where $I$ is the ideal
$\left( x_i^2, q_{ij}x_ix_j+x_jx_i|1\leqslant i<j\leqslant n\right)$ and one writes $x\wedge x'=x\otimes x'\mod I$. Then we have that
\begin{align}\label{e:wedge1}
&x_j \wedge x_i=-q_{ij}x_i\wedge x_j, \\ \label{e:wedge2}
&x_i\wedge x_i=0,
\end{align}
where $i<j$. Then for $\sigma\in S_n$,
\begin{equation}
x_{\sigma_1}\wedge\cdots\wedge x_{\sigma_n}=(-q)_{\si}x_1\wedge\cdots\wedge x_{n}.
\end{equation}
Clearly $\Lambda_q$ is a left $\mathcal A$-comodule with the coaction
$\mu_q:\Lambda_q\rightarrow \mathcal A\ot\Lambda_q$ given by
\begin{equation}
\mu_q(x_{i})=\sum a_{ij}\ot x_{j}.
\end{equation}

 The row determinant can be computed via the coaction:
\begin{equation}
\mu_q(x_1\wedge\cdots\wedge x_n)=\rdet(A)\otimes x_1\wedge\cdots \wedge x_n.
\end{equation}
Subsequently the comodule structure map
$(\id\ot\mu_q)\mu_q=(\Delta\ot\id)\mu_q$ implies that $\Delta(\rdet(A))=\rdet(A)\otimes \rdet(A)$.

Let $\Lambda_p=\Lambda_p(y)$ be the unital associative algebra
$\mathbb C\langle y_1, \ldots,y_n\rangle/J$, where $J$ is the ideal
$(y_i^2, p_{ij}y_iy_j+y_jy_i | 1\leqslant i<j\leqslant n)$. Using similar convention
for $x_i$'s, the relations are
\begin{align}\label{qwedge3}
&y_j \wedge y_i=-p_{ij}y_i\wedge y_j, \\
&y_i\wedge y_i=0,
\end{align}
where $1\leqslant i<j\leqslant n$. The space $\Lambda_p$ is a right $\mathcal A$-comodule with coaction
$
\mu_p':\Lambda_p\rightarrow  \Delta_p\ot \mathcal A
$
given by
\begin{equation}\mu'_p(y_{i})=\sum_{j=1}^n y_{j}\ot a_{ji}.
\end{equation}
Similar to $\rdet$ one has that
$\Delta(\cdet(A))=\cdet(A)\otimes \cdet(A)$. In general, $\rdet\neq \cdet$. However we will consider a
special case while the two determinants are equal.

From now on we assume that $(p_{ij}, q_{ij})$ live on the parabola
\begin{equation}\label{e:cond-p}
p_{ij}q_{ij}=\lambda, \quad \lambda\neq -1, \quad i<j.
\end{equation}
These relations \eqref{e:cond-p} are referred as {\it the $(p, \lambda)$ conditions} \cite{AST}.
In the following whenever we talk about
the multiparameter quantum groups we always consider those satisfying the $(p, \lambda)$ relations.

\begin{theorem}
In the bialgebra $\mathcal A$ with the $(p, \lambda)$ relations, one has that
$$\rdet (A)=\cdet (A).$$
\end{theorem}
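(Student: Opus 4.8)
The plan is to prove the identity monomial by monomial. I will show that each monomial $a_{\tau_1,1}\cdots a_{\tau_n,n}$ appearing in $\cdet(A)$ (see \eqref{e:qper}), after its factors are reordered so that the row indices increase, becomes a scalar multiple of the monomial $a_{1,\tau^{-1}_1}\cdots a_{n,\tau^{-1}_n}$ appearing in $\rdet(A)$, and that the scalar is exactly $(-q)_{\tau^{-1}}/(-p)_\tau$. Summing over $\tau\in S_n$ (equivalently over $\sigma=\tau^{-1}$) then gives $\cdet(A)=\rdet(A)$. The whole difficulty is to perform the reordering using only genuine commutations, since the raw cross relations \eqref{relation3}--\eqref{relation4} are not commutations but mix the ``diagonal'' monomial $a_{ik}a_{jl}$ with the ``anti-diagonal'' monomial $a_{il}a_{jk}$.

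First I would extract a true commutation relation from the $(p,\lambda)$ conditions \eqref{e:cond-p}. Writing $p_{ij}=\lambda q_{ji}$ and $p_{kl}=\lambda q_{lk}$, one checks that $p_{ij}/p_{kl}=q_{kl}/q_{ij}$, so for $i<j$ and $k<l$ the left-hand sides of \eqref{relation3} and \eqref{relation4} coincide. Subtracting the two relations cancels the diagonal monomials entirely and leaves
\begin{equation*}
\bigl(p_{ij}+q_{ji}\bigr)\,a_{jk}a_{il}=\bigl(q_{kl}+\tfrac{1}{p_{kl}}\bigr)\,a_{il}a_{jk}.
\end{equation*}
Both coefficients are nonzero multiples of $\lambda+1$, namely $(\lambda+1)q_{ji}$ and $q_{kl}(\lambda+1)/\lambda$, so since $\lambda\neq-1$ they may be cancelled to yield the clean anti-diagonal commutation
\begin{equation*}
a_{jk}a_{il}=\frac{q_{kl}}{p_{ij}}\,a_{il}a_{jk},\qquad i<j,\ k<l.
\end{equation*}
This relation is the engine of the proof: it transposes two entries whose rows and columns are oppositely ordered at the cost of a single factor $q_{kl}/p_{ij}$, creating no extra terms.

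Next I would reorder each $\cdet$-monomial by a selection sort that invokes only this anti-diagonal rule. Starting from $a_{\tau_1,1}\cdots a_{\tau_n,n}$, whose columns are already increasing, I repeatedly locate the factor carrying the largest remaining row index and move it rightward past every factor to its right. At each such step the moving factor has the larger row index and, by the maintained invariant that the columns of the unsorted prefix are increasing, the smaller column index; hence every transposition is precisely of the anti-diagonal type above and never calls on \eqref{relation3}. Once the largest row is parked at the far right, the remaining prefix still has strictly increasing columns, so the invariant persists and the recursion goes through, bringing the factors into the row-ordered form $a_{1,\tau^{-1}_1}\cdots a_{n,\tau^{-1}_n}$ while accumulating one factor $q_{kl}/p_{ij}$ per transposition.

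Finally I would settle the bookkeeping of scalars. The transpositions performed are indexed exactly by the inversions of $\sigma=\tau^{-1}$, and for the inversion $i<j$, $\sigma_i>\sigma_j$ the accumulated factor is $q_{\sigma_j\sigma_i}/p_{ij}$; the numerators reassemble precisely the $q$-product in $(-q)_\sigma$ from \eqref{e:qinv}. For the denominators I would use the standard length-preserving bijection between the inversions of $\tau$ and of $\tau^{-1}$, under which $\prod p_{ij}$ over the inversions of $\sigma$ equals the $p$-product in $(-p)_\tau$. As $l(\tau)=l(\tau^{-1})$, the signs agree as well, so $(-p)_\tau\cdot(\text{scalar})=(-q)_{\tau^{-1}}$ and the theorem follows. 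I expect the genuine obstacle to be the reordering step: one must verify that the sorting strategy together with its invariant really excludes every appeal to the messy relation \eqref{relation3}. Once the anti-diagonal rule and this invariant are secured, the remaining scalar identity is a routine computation with permutations.
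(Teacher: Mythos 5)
Your proof is correct, but it takes a genuinely different route from the paper. The paper's argument is global and coordinate-free: it forms $\Phi=\sum_{i,j}a_{ij}\ot y_i\ot x_j$ in $\mathcal A\ot\Lambda_q\ot\Lambda_p$, shows the summands $\omega_i=y_i\delta_i$ satisfy $\omega_i\wedge\omega_i=0$ and $\omega_j\wedge\omega_i=\lambda\,\omega_i\wedge\omega_j$, and computes $\wedge^n\Phi$ two ways to get $[n]_\lambda!\,\rdet(A)$ and $[n]_\lambda!\,\cdet(A)$ times the same top form. You instead extract from \eqref{relation3}, \eqref{relation4} and \eqref{e:cond-p} the genuine commutation
\begin{equation*}
a_{jk}a_{il}=\frac{q_{kl}}{p_{ij}}\,a_{il}a_{jk},\qquad i<j,\ k<l,
\end{equation*}
(your algebra checks out: the two left-hand sides agree because $p_{ij}/p_{kl}=q_{kl}/q_{ij}$ under $p_{ij}q_{ij}=\lambda$, and the surviving coefficients are nonzero multiples of $\lambda+1$), and then sort each $\cdet$-monomial into row order; your invariant that the unsorted prefix keeps increasing columns does ensure every adjacent swap is of the anti-diagonal type, the swaps are in bijection with the inversions of $\sigma=\tau^{-1}$, and the scalar bookkeeping via $l(\tau)=l(\tau^{-1})$ correctly turns $(-p)_\tau$ into $(-q)_{\tau^{-1}}$. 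Your approach buys two things: it proves the stronger term-by-term identity $(-p)_\tau a_{\tau_1,1}\cdots a_{\tau_n,n}=(-q)_{\tau^{-1}}a_{1,\tau^{-1}_1}\cdots a_{n,\tau^{-1}_n}$, and it needs only $\lambda\neq -1$, whereas the paper's cancellation of $[n]_\lambda!$ tacitly requires $[k]_\lambda\neq 0$ for all $k\leq n$ (e.g.\ it degenerates when $\lambda$ is a primitive cube root of unity and $n\geq 3$). What the paper's method buys in exchange is the machinery ($\delta_i$, $\omega_i$, the comodule structure) that is reused immediately afterwards for minors, Laplace expansions, and the Pfaffian identities.
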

\begin{proof}
Consider the following special linear element $\Phi$ in $\mathcal A\otimes \Lambda_q\otimes \Lambda_p$:
\begin{equation}
\Phi=\sum_{i,j=1}^n a_{ij}\ot y_i\ot x_j=y^TAx,
\end{equation}
where we have set $x=(x_1, \ldots, x_n)^T$ and $y=(y_1, \ldots, y_n)^T$.

Put further $\delta=(\delta_1, \ldots, \delta_n)^T$,
$\partial=(\partial_1, \ldots, \partial_n)^T$, and consider the following elements:
\begin{align}
\delta &=Ax, \\
\partial &=A^Ty.
\end{align}

Let $\omega_i=y_i\delta_i=\sum_{j=1}^n y_ia_{ij}x_j$. It follows from the relations
\eqref{relation1}-\eqref{relation4} and the commutation relations of $\Lambda_q$ and $\Lambda_p$ that
\begin{align}\label{e:ext1}
&\omega_i \wedge \omega_i=0, \quad 1\leqslant i\leqslant n, \\ \label{e:ext2}
&\omega_j \wedge \omega_i=\lambda \omega_i \wedge w_j, \quad 1\leqslant i<j\leqslant n.
\end{align}

It follows from (\ref{e:ext1})-(\ref{e:ext2}) that
\begin{align*}
\wedge^n\Phi &=(\sum_{\sigma\in S_n}\lambda^{l(\sigma)})\omega_1\wedge\cdots\wedge\omega_n \\
&=[n]_{\lambda}!(x_1\wedge\cdots\wedge x_n)(\partial_1\wedge\cdots\wedge\partial_n)\\
&=[n]_{\lambda}!\rdet(A) (  y_1\wedge\cdots\wedge y_n) (x_1\wedge\cdots\wedge x_n).
\end{align*}

Note that one can also rewrite $\Phi=\sum_{i=1}^n\omega_i'$ with $\omega_i'=\partial_ix_i=\sum_{j=1}^n a_{ji}y_jx_i$.
Then it follows that
\begin{align*}
\wedge^n\Phi=[n]_{\lambda}!\cdet(A) (  y_1\wedge\cdots\wedge y_n) (x_1\wedge\cdots\wedge x_n),
\end{align*}
which implies that
$$\rdet(A)=\cdet(A).$$
\end{proof}

Due to this identity, from now on, we will define the multiparameter quantum determinant for the
$(p, \lambda)$-quantum group as
\begin{equation}\label{e:qdet2}
\begin{aligned}
{\det}_q(A)&=\sum_{\sigma\in S_n}(-q)_{\sigma}a_{1,\sigma_1}\cdots a_{n,\sigma_n}\\
&=\sum_{\sigma\in S_n}(-p)_{\sigma}a_{\sigma_1,1}\cdots a_{\sigma_n,n}.
\end{aligned}
\end{equation}

For a pair of $t$ indices $i_1, \ldots, i_t$ and $j_1, \ldots, j_t$,
we define the quantum row-minor $\det_q(A_{j_1 \ldots j_t}^{i_1 \ldots i_t})$
as in (\ref{e:qdet2}).
Like the determinant, the quantum row minor row also equals to the quantum column minor
for any pairs of ordered indices $1\leqslant i_1<\cdots<i_t\leqslant n$ and $1\leqslant j_1<\cdots<j_t
\leqslant n$, which justifies the notation.

For any $t$ indices $i_1, \ldots, i_t$ 
\begin{align}\label{e:minor1}
\delta_{i_1}\wedge\cdots\wedge\delta_{i_t}=\sum_{j_1<\cdots<j_t}{\det}_q(A^{i_1\ldots i_t}_{j_1\ldots j_t})x_{j_1}\wedge\cdots\wedge x_{j_t},
\end{align}
where the sum runs through all indices $1\leqslant j_1<\cdots<j_t\leqslant n$. This
implies that ${\det}_q(A^{i_1\ldots i_t}_{j_1\ldots j_t})=0$ whenever there are two identical rows.

As $\delta_i$'s obey the wedge relations \eqref{e:wedge1}-\eqref{e:wedge2}, for any $t$-shuffle $\sigma\in S_n$ : $1\leqslant \sigma_1<\cdots<\sigma_t, \sigma_{t+1}<\cdots<\sigma_n\leqslant n$,
one has that
$$\delta_{\sigma_1}\wedge\cdots\wedge\delta_{\sigma_t}\wedge \delta_{\sigma_{t+1}}\wedge\cdots\wedge\delta_{\sigma_n}
=(-q)_{\sigma}\delta_1\wedge\cdots\wedge\delta_n.
$$
Note that $x_j$'s also satisfy the same wedge relations. This
then implies the following Laplace expansion by invoking \eqref{e:minor1}: for each fixed $t$-shuffle
$\sigma_1<\cdots<\sigma_t, \sigma_{t+1}<\cdots<\sigma_n$, one has that
\begin{equation}\label{e:lap1}
{\det}_q(A)=\sum_{\alpha}
\frac{(-q)_{\alpha}} {(-q)_{\sigma}}
{\det}_q(A_{\alpha_1 \ldots \alpha_t}^{\sigma_1 \ldots \sigma_t}){\det}_q(A_{\alpha_{t+1} \ldots \alpha_n}^{\sigma_{t+1}\ldots \sigma_n}),
\end{equation}
where the sum runs through all $t$-shuffles $\alpha\in S_n$ such that $\alpha_1<\cdots<\alpha_t, \alpha_{t+1}<\cdots<\alpha_n$.

In particular, for fixed $i, k$
\begin{equation}\label{e:Lap01}
\begin{aligned}
\delta_{ik}{\det}_q(A)&=\sum_{j=1}^n
\frac{\prod_{l<j}(-q_{lj})}{\prod_{l<i}(-q_{li})}a_{ij}
{\det}_q (A^{\hat{k}}_{\hat{j}})\\
&=\sum_{j=1}^n
\frac{\prod_{l>j}(-q_{jl})}{\prod_{l>i}(-q_{il})}
{\det}_q(A^{\hat{k}}_{\hat{j}})a_{ij},
\end{aligned}
\end{equation}
where $\hat{i}$ means the indices $1, \ldots, i-1, i+1, \ldots, n$ for brevity.

As for the quantum (column) determinant or column-minor, the corresponding Laplace expansion
for a fixed $r$-shuffle  $(\tau_1\ldots \tau_n)$ of $n$ such that $\tau_1<\cdots<\tau_r, \tau_{r+1}<\cdots<\tau_n$
\begin{equation}\label{e:lap2}
{\det}_q(A)
=\sum_{\beta}
\frac{(-p)_{\beta}} {(-p)_{\tau}}
{\det}_q(A^{\beta_1 \ldots \beta_r}_{\tau_1 \ldots \tau_r}){\det}_q(A^{\beta_{r+1} \ldots \beta_n}_{\tau_{r+1}\ldots \tau_n}),
\end{equation}
where the sum runs through all $r$-shuffles $\beta\in S_n$ such that $\beta_1<\cdots<\beta_r, \beta_{r+1}<\cdots<\beta_n$.

In particular, we have that for fixed $i, k$
\begin{align}\nonumber
\delta_{ik}{\det}_q(A)&=\sum_{j=1}^n
\frac{\prod_{l<j}(-p_{lj})}{\prod_{l<i}(-p_{li})}
a_{ji}{\det}_q(A_{\hat{k}}^{\hat{j}})\\ \label{e:Lap2}
&=\sum_{j=1}^n\frac{\prod_{l>j}(-p_{jl})}{\prod_{l>i}(-p_{il})}
{\det}_q(A^{\hat{j}}_{\hat{k}})a_{ji}.
\end{align}

\begin{theorem}\label{quasi-central}
In the bialgebra $\mathcal A $ one has that
$$a_{ij}{\det}_q(A)=\lambda^{j-i}\frac{\prod_{l=1}^{n}q_{li}}{\prod_{l=1}^{n}q_{lj}}{\det}_q(A) a_{ij}.$$
\end{theorem}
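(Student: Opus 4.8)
The plan is to reduce the statement to two clean quasi-commutation relations among the generators, obtained by combining \eqref{relation3} and \eqref{relation4} with the $(p,\lambda)$-condition \eqref{e:cond-p}. Subtracting \eqref{relation4} from \eqref{relation3} and using $p_{ij}/p_{kl}=q_{kl}/q_{ij}$ (a consequence of $p_{ij}q_{ij}=p_{kl}q_{kl}=\lambda$), the coefficient of $a_{jl}a_{ik}$ drops out and, after dividing by $1+\lambda\neq 0$, one is left with the clean anti-diagonal relation
\begin{equation}
a_{il}a_{jk}=\frac{\lambda}{q_{ij}q_{kl}}\,a_{jk}a_{il},\qquad i<j,\ k<l.
\end{equation}
Feeding this back into \eqref{relation3} turns the diagonal relation into
\begin{equation}
a_{ik}a_{jl}=\frac{q_{kl}}{q_{ij}}\,a_{jl}a_{ik}+q_{kl}\frac{\lambda-1}{\lambda}\,a_{il}a_{jk},\qquad i<j,\ k<l,
\end{equation}
so that, together with the row relation \eqref{relation1} and the column relation \eqref{relation2}, every product $a_{ij}a_{r\si_r}$ can be rewritten with $a_{ij}$ on the right: the outcome is a pure scalar multiple except in the single ``diagonal'' configuration, where exactly one correction term proportional to an anti-diagonal product survives.

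The main computation is then to push $a_{ij}$ to the right through each monomial $a_{1\si_1}\cdots a_{n\si_n}$ of ${\det}_q(A)$. The scalar factors assemble as predicted: the same-row crossing contributes $q_{j\si_i}$, the same-column crossing (at the row $\si^{-1}(j)$) contributes a factor of $p$ which \eqref{e:cond-p} converts into a power of $\lambda$ times a $q$-factor, and the anti-diagonal crossings supply the remaining $q$-factors together with the $\lambda$'s from the first displayed relation. Collecting the $q$-part over a permutation yields $\big(\prod_m q_{jm}\big)\big(\prod_r q_{ri}\big)=\prod_l q_{li}/\prod_l q_{lj}$, while the exponent $j-i$ records the net conversion of $p$- into $\lambda$-factors. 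To keep the bookkeeping finite I would organize this by induction on $n$ using the Laplace expansion \eqref{e:Lap01}: expand ${\det}_q(A)$ along a row $r\neq i$ (say $r=n$ when $i\neq n$, the case $i=n$ being symmetric) so that $a_{ij}$ sits inside the $(n-1)\times(n-1)$ minors ${\det}_q(A^{\hat r}_{\hat{j'}})$, commute $a_{ij}$ past these by the inductive hypothesis, commute it past the boundary entries $a_{rj'}$ by the clean relations above, and re-sum, using \eqref{e:cond-p} to restore the missing factors $q_{ni},q_{nj}$ and the extra power of $\lambda$.

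The hard part is the cancellation of the correction terms produced by the second displayed relation: for $n\geq 3$ these genuinely occur (for $n\leq 2$ every crossing is already clean), so I must show that after antisymmetrization they sum to zero, leaving ${\det}_q(A)$ transforming by a single scalar independent of $\si$. The cleanest way I expect to see this is that each surviving correction feeds into a term carrying a repeated row among the $\delta_i$'s, which vanishes by \eqref{e:minor1}; equivalently the correction terms are precisely the ``false'' Laplace expansions $\delta_{ik}{\det}_q(A)$ with $i\neq k$ occurring in \eqref{e:Lap01} and \eqref{e:Lap2}, hence $0$. Finally, I would cross-check the resulting scalar against the column expansion \eqref{e:Lap2}: running the same computation with the roles of $p$ and $q$ (rows and columns) interchanged must again produce ${\det}_q(A)a_{ij}$, and the equality of the two scalars, forced by the identity $\rdet(A)=\cdet(A)$, pins down the exponent $j-i$ and the quotient $\prod_l q_{li}/\prod_l q_{lj}$ unambiguously.
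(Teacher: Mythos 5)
Your preparatory algebra is correct: under the $(p,\lambda)$-condition the difference of \eqref{relation3} and \eqref{relation4} does yield the clean anti-diagonal relation $a_{il}a_{jk}=\frac{\lambda}{q_{ij}q_{kl}}a_{jk}a_{il}$, and substituting back gives your second displayed relation with the correction term $q_{kl}\frac{\lambda-1}{\lambda}a_{il}a_{jk}$. However, the argument has a genuine gap exactly where you flag ``the hard part'': the cancellation of these correction terms is asserted, not proved. When you push $a_{ij}$ through a monomial $a_{1\si_1}\cdots a_{n\si_n}$, every factor $a_{r\si_r}$ in ``diagonal position'' relative to $(i,j)$ produces an extra summand in which the column indices are swapped between rows $i$ and $r$; these summands no longer lie in the original monomial's permutation class, and showing that their total over $\si\in S_n$ vanishes is the entire content of the theorem. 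Your proposed mechanism --- that the corrections assemble into quantum minors with repeated rows, or ``equivalently'' into the false expansions $\delta_{ik}\det_q(A)$ with $i\neq k$ --- is a plausible guess but is not demonstrated, and the claimed equivalence between the two descriptions is itself unproved. Likewise, your final step of pinning down the scalar by comparing row and column expansions presupposes that $a_{ij}\det_q(A)$ is a \emph{single} scalar multiple of $\det_q(A)a_{ij}$, which is precisely what the cancellation is needed to establish.

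For contrast, the paper avoids all term-by-term commutation. It introduces the cofactor matrix $X$ with $x_{ij}=\det_q(A^{\hat{j}}_{\hat{i}})$ and the rescaled matrices $A'$, $A''$ with $a'_{ij}=\frac{\prod_{l<j}(-q_{lj})}{\prod_{l<i}(-q_{li})}a_{ij}$ and $a''_{ij}=\frac{\prod_{l>i}(-p_{il})}{\prod_{l>j}(-p_{jl})}a_{ij}$; the row Laplace expansion \eqref{e:Lap01} gives $A'X=\det_q(A)\,I$ and the column expansion \eqref{e:Lap2} gives $XA''=\det_q(A)\,I$, whence by associativity $A'\,\det_q(A)=A'XA''=\det_q(A)\,A''$, i.e. $a'_{ij}\det_q(A)=\det_q(A)\,a''_{ij}$. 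The $(p,\lambda)$-condition $p_{il}=\lambda q_{li}$ then converts the ratio of $p$- and $q$-products into $\lambda^{j-i}\prod_l q_{li}/\prod_l q_{lj}$. If you want to salvage your direct approach, the missing ingredient is precisely the pair of identities $\delta_{ik}\det_q(A)=\sum_j a'_{ij}x_{jk}=\sum_j x_{kj}a''_{ji}$ --- that is, you would in effect be re-deriving the two Laplace expansions, at which point the sandwich argument is both shorter and complete.
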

\begin{proof}
Let $X=(x_{ij}), A'=(a_{ij}'), A''=(a_{ij}'')$ be the matrices with entries in $\mathcal A $ defined by
\begin{align}
x_{ij}&={\det}_q (A^{\hat{j}}_{\hat{i}}),\\
a'_{ij}&=\frac{\prod_{l<j}(-q_{lj})}{\prod_{l<i}(-q_{li})}a_{ij},\\
a''_{ij}&=\frac{\prod_{l>i}(-p_{il})}{\prod_{l>j}(-p_{jl})}a_{ij}.
\end{align}
It follows from the Laplace expansion that
$$A' {\det}_q=A'XA''={\det}_q A''.$$
Therefore $\frac{\prod_{l<j}(-q_{lj})}{\prod_{l<i}(-q_{li})}a_{ij}{\det}_q(A)
=\frac{\prod_{l>i}(-p_{il})}{\prod_{l>j}(-p_{jl})}{\det}_q(A) a_{ij}.$  This is exactly

\begin{equation}\label{det commute}
a_{ij}{\det}_q(A)=\lambda^{j-i}\frac{\prod_{l=1}^{n}q_{li}}{\prod_{l=1}^{n}q_{lj}}{\det}_q(A) a_{ij}.
\end{equation}
\end{proof}

\begin{remark}\cite{BG}
 ${\det}_q(A)$ is central if and only if $\lambda^{j-i}\prod_{l=1}^{n}q_{li}=\prod_{l=1}^{n}q_{lj}$ for any $i,j$.
\end{remark}

Theorem \ref{quasi-central} implies that ${\det}_q(A)$ is a regular element in the bialgebra $\mathcal A $, therefore we can define the localization $\mathcal A [{{\det}_q}^{-1}]$, which will be denoted as $\mbox{GL}_{p, \lambda}(n)$. In fact, Theorem \ref{quasi-central} gives the following identity:

\begin{equation}\label{inverse of det}
{\det}_q(A)^{-1}a_{ij}=\lambda^{j-i}\frac{\prod_{l=1}^{n}q_{li}}{\prod_{l=1}^{n}q_{lj}} a_{ij}{\det}_q(A)^{-1}.
\end{equation}

By defining the antipode
\begin{equation}
\begin{aligned}
S(a_{ij})&=\frac{\prod_{l<i}-q_{li}}{\prod_{l<j}-q_{lj}}{\det}_q (A^{\hat{j}}_{\hat{i}}){\det}_q(A)^{-1}\\
&=\frac{\prod_{l>j}-p_{jl}}{\prod_{l>i}-p_{il}}{\det}_q(A)^{-1}{\det}_q(A^{\hat{j}}_{\hat{i}})
\end{aligned}
\end{equation}
the bialgebra $\mathcal A [{{\det}_q}^{-1}]$ becomes a Hopf algebra, thus a quantum group
in the sense of Drinfeld.

In fact, the second equation follows from (\ref{inverse of det}). Therefore, $AS(A)=S(A)A=I$ by the Laplace expansions. Subsequently
$$(\id\otimes S)\Delta=(S\otimes \id)\Delta=\varepsilon. $$

\section{Quasideterminants}

In this section we will work with the ring of fractions of noncommutative elements.
First of all let us recall some basic facts about quasideterminants.
  Let $X$ be the set of $n^2$ elements
$x_{ij}, 1\leq i,j\leq n$. For convenience, we also use $X$ to denote the matrix
$(x_{ij})$ over the ring generated by $x_{ij}$.

Denote by $F(X)$ the free division ring generated by $0,1,x_{ij}, 1\leq i,j\leq n$.
It is well-known that the matrix $X=(x_{ij})$ is an invertible element over $F(X)$
 \cite{GGRW} .


Let $I,J$ be two finite subsets of cardinality $k\leq n$ inside $\{1, \ldots, n\}$.
Following \cite{GGRW}, we introduce the notion of quasiderminant.
\begin{definition}
For $i\in I, j\in J$, the $(i,j)$-th quasideterminant $|X|_{ij}$ 
is the following element of $F(X)$:
$$|X|_{ij}=y_{ji}^{-1},$$
where $Y=X^{-1}=(y_{ij})$.
\end{definition}

If $n=1$, $I=i, J=j$. Then $|X|_{ij}=x_{ij}$.

When $n\geq 2$, and let $X^{ij}$ be the $(n-1)\times(n-1)$-matrix obtained from X by
deleting the $i$th row and $j$th column. In general $X^{i_1\cdots i_r, j_1\cdots j_r}$ denotes
the submatrix obtained from $X$ by deleting the $i_1, \cdots, i_r$-th rows, and
$i_1, \cdots, i_r$-th columns. Then
$$|X|_{ij}=x_{ij}-\sum_{i', j'} x_{ii'}(|X^{ij}|_{j'i'})x_{j'j},$$
where the sum runs over $i'\notin I\setminus\{i\}, j'\notin J\setminus\{j\}$.

\begin{theorem}

Let $A$ be the matrix of generators of $\mbox{GL}_{p, \lambda}(n)$.
In the ring of fractions of elements of $\mbox{GL}_{p, \lambda}(n)$, one has that
 \begin{equation}\label{det quasidet1}
 {\det}_q(A)=|A|_{11}|A^{11}|_{22}|A^{12,12}|_{33}\cdots a_{nn}
 \end{equation}
and the quasi-minors in the right-hand side commute with each other. More generally, for
 two permutations $\sigma$ 
and $\tau$ 
of $S_n$, one has that
 \begin{equation}\label{det quasidet2}
 {\det}_q(A)=\frac{(-q)_{\tau}}{(-q)_{\sigma}}|A|_{\si_1\tau_1}|A^{\si_1\tau_1}|_{\si_2\tau_2}|A^{\si_1\si_2,\tau_1\tau_2}|_{\si_3\tau_3}\cdots a_{\si_n\tau_n}.
 \end{equation}
\end{theorem}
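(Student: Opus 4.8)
The plan is to reduce the entire statement to a single cofactor-type identity and then telescope. Writing $A^{ij}$ for the matrix obtained from $A$ by deleting row $i$ and column $j$ (the Section~3 convention), I would first prove the key identity
\[
|A|_{ij}=\frac{\prod_{l<i}(-q_{li})}{\prod_{l<j}(-q_{lj})}\,{\det}_q(A)\,{\det}_q(A^{ij})^{-1},
\]
valid over the skew field of fractions of $\mathrm{GL}_{p,\lambda}(n)$, together with its verbatim analogue for every square submatrix (rows $\{1,\dots,n\}\setminus S$, columns $\{1,\dots,n\}\setminus T$, with the products restricted to the surviving indices). Everything else is bookkeeping built on this identity.

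To establish the key identity I would use that in the localization the matrix inverse $Y=A^{-1}$ coincides with the antipode matrix, i.e.\ $y_{ij}=S(a_{ij})$: indeed $AS(A)=S(A)A=I$ and inverses are unique over the division ring of fractions. By the definition of quasideterminant, $|A|_{ij}=y_{ji}^{-1}=S(a_{ji})^{-1}$, and feeding in the antipode formula (noting $A^{\hat i}_{\hat j}=A^{ij}$) yields the displayed expression after inverting, the central scalar prefactor surviving unchanged. The same argument, or equivalently the Laplace expansion \eqref{e:Lap01} read as a single cofactor row, applies to any square submatrix, because the defining relations \eqref{relation1}--\eqref{relation4} are inherited by every sub-quantum-matrix, so each block carries its own antipode and quantum determinant.

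With the key identity in hand, both formulas follow by iterating the expansion. Setting $E_k={\det}_q\big(A^{\{\sigma_1,\dots,\sigma_k\},\{\tau_1,\dots,\tau_k\}}\big)$ with $E_0={\det}_q(A)$ and $E_n=1$, the key identity applied at the pivot $(\sigma_k,\tau_k)$ of the $k$-th surviving block reads $|A^{\cdots}|_{\sigma_k\tau_k}=s_k\,E_{k-1}E_k^{-1}$, where
\[
s_k=\prod_{\substack{l<\sigma_k\\ l\notin\{\sigma_1,\dots,\sigma_{k-1}\}}}(-q_{l\sigma_k})\ \Big/\ \prod_{\substack{l<\tau_k\\ l\notin\{\tau_1,\dots,\tau_{k-1}\}}}(-q_{l\tau_k}).
\]
Since the $s_k$ are central, the product $\prod_k|A^{\cdots}|_{\sigma_k\tau_k}$ telescopes in the $E_k$, irrespective of any commutativity, to $\big(\prod_k s_k\big)E_0E_n^{-1}=\big(\prod_k s_k\big){\det}_q(A)$. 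It then remains to verify the purely combinatorial identity $\prod_k s_k=(-q)_\sigma/(-q)_\tau$: a factor $-q_{\sigma_{k'}\sigma_k}$ occurs in the numerator precisely when $k<k'$ and $\sigma_{k'}<\sigma_k$, i.e.\ exactly over the inversions of $\sigma$, which reconstitutes \eqref{e:qinv}, and likewise for the denominator. Rearranging gives \eqref{det quasidet2}, and $\sigma=\tau=\mathrm{id}$ specializes to \eqref{det quasidet1}.

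Finally, for the commutativity of the factors I would show that the nested principal minors $D_m={\det}_q\big(A^{\{1,\dots,m\},\{1,\dots,m\}}\big)$ commute pairwise, since each quasi-minor in \eqref{det quasidet1} is a ratio $D_{k-1}D_k^{-1}$ of these. For $m<k$, every generator $a_{ij}$ occurring in $D_k$ has $i,j\in\{k+1,\dots,n\}\subseteq\{m+1,\dots,n\}$, so Theorem~\ref{quasi-central} applied to the sub-quantum-group on $\{m+1,\dots,n\}$ moves it past $D_m$ with scalar $\lambda^{j-i}\prod_{l>m}q_{li}\big/\prod_{l>m}q_{lj}$. On a determinant monomial of $D_k$ the row and column index multisets both equal $\{k+1,\dots,n\}$, so the $\lambda$-exponents sum to zero and the $q$-factors cancel by a bijection, leaving total scalar $1$; hence $D_kD_m=D_mD_k$. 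I expect the genuine obstacle to be the index bookkeeping in the key identity---pinning down the exact scalar prefactor and the row/column placement in $S(a_{ji})^{-1}$, and checking it descends correctly to submatrices---after which the telescoping and the inversion count are routine.
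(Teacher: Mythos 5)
Your proposal is correct and follows essentially the same route as the paper: identify $|A|_{ij}$ with $S(a_{ji})^{-1}$ via uniqueness of inverses in the division ring, invoke the antipode/Laplace cofactor formula, iterate over nested submatrices, and derive commutativity of the factors from Theorem~\ref{quasi-central}. You merely make explicit two steps the paper compresses into ``by induction'': the telescoping of the ratios $E_{k-1}E_k^{-1}$ and the inversion-counting check that $\prod_k s_k=(-q)_\sigma/(-q)_\tau$, both of which you carry out correctly.
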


\begin{proof}
By definition the quasi-determinants of $A$ are  inverses of the entries of the antipode $S(A)$,

\begin{equation}
\begin{aligned}
|A|_{ij}=S(a_{ji})^{-1}&=\frac{\prod_{l<i}(-q_{li})}{\prod_{l<j}(-q_{lj})}{\det}_q(A){\det}_q (A^{ij})^{-1}\\
&=\frac{\prod_{l>j}(-p_{jl})}{\prod_{l>i}(-p_{il})}{\det}_q(A^{ij})^{-1}{\det}_q(A),
\end{aligned}
\end{equation}
then
\begin{equation}
\begin{aligned}
\frac{\prod_{l<j}(-q_{lj})}{\prod_{l<i}(-q_{li})}|A|_{ij}{\det}_q (A^{ij})={\det}_q(A) \\
\frac{\prod_{l>i}(-p_{il})}{\prod_{l>j}(-p_{jl})} {\det}_q(A^{ij})|A|_{ij}= {\det}_q(A),
\end{aligned}
\end{equation}
By induction on the size of the matrix $A$, one sees that (\ref{det quasidet1}) and (\ref{det quasidet2}) hold.

It follows from \eqref{det commute} that ${\det}_q(A^{\{1,\ldots,s\}\{1,\ldots,s\}})$ and
 ${\det}_q(A^{\{1,\ldots,t\}\{1,\ldots,t\}})$ commute for $1\leq s,t\leq n-1$. Any factor on the right hand side of \eqref{det quasidet1} can be expressed as ${\det}_q (A^{\{1,\ldots,s\}\{1,\ldots,s\}}) {{\det}_q (A^{\{1,\ldots,s+1\}\{1,\ldots,s+1\}})}^{-1}$ multiplied by a scalar,
 therefore they commute with each other.
\end{proof}

\section{Multiparameter quantum Pfaffians}

\begin{definition}
Let $B=(b_{ij})$ be an $2n\times 2n$ square $p$-antisymmetric matrix with noncommutative entries such that
$b_{ji}=-p_{ij}b_{ij}, i<j$. The
multiparameter quantum $q$-Pfaffian is defined by

\begin{align*}
\Pf_q(B)
=\sum_{\sigma\in \Pi}(-q)_{\sigma}b_{\sigma(1)\sigma(2)}b_{\sigma(3)\sigma(4)}\cdots b_{\sigma(2n-1)\sigma(2n)},
\end{align*}
where $p=(p_{ij}),q=(q_{ij}),i<j$, and the sum runs through the set $\Pi$ of permutations $\sigma$ of $2n$ such that
$\sigma(2i-1)<\sigma(2i), i=1,\ldots,n.$
\end{definition}

Note that the parameters $q_{ij}$ and $p_{ij}$ satisfy the $(p,\lambda)$ condition: $p_{ij}q_{ij}=\lambda $.
\begin{proposition}\label{Pfaffian-Lap}
For any $0\leq t\leq n$,
\begin{equation}
\Pf_q(B)=\sum_{I} inv(I,I^c) \Pf_q(B_{I})\Pf_q(B_{I^c}),
\end{equation}
where the sum is taken over all subsets $I=\{i_1\cdots i_{2t}|i_1<\cdots <i_{2t}\}$ of $[1,2n]$, and
\begin{equation}
inv(I,J)=\prod_{i\in I, j\in J, i>j} (-q_{ji}).
\end{equation}
\end{proposition}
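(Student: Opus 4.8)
The plan is to mimic the exterior-algebra computation used for the determinant identity, replacing the bilinear form $y^TAx$ with a single quadratic form built from the $p$-antisymmetric matrix $B$. Specifically, I would work in the algebra $\mathcal A\otimes\Lambda_q(x)$ and consider the two-form
\begin{equation*}
\Omega=\sum_{i<j}b_{ij}\,x_i\wedge x_j,
\end{equation*}
where the $x_i$ obey the wedge relations \eqref{e:wedge1}--\eqref{e:wedge2}. The key computational input is that, because $B$ is $p$-antisymmetric and the $b_{ij}$ satisfy the appropriate $q$-commutation relations compatible with the $(p,\lambda)$ condition, the wedge power $\wedge^n\Omega$ collapses to a scalar multiple of $x_1\wedge\cdots\wedge x_{2n}$, with the scalar being $[n]_\lambda!\,\Pf_q(B)$ (up to the factor accounting for the $2^n n!$ orderings, exactly as $[n]_\lambda!$ appeared in the determinant proof). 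First I would establish this normalization identity $\wedge^n\Omega=[n]_\lambda!\,\Pf_q(B)\,x_1\wedge\cdots\wedge x_{2n}$, since it is the Pfaffian analogue of the $\wedge^n\Phi$ formula and converts the combinatorics of pair-partitions into a single coordinate-free object.

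With this in hand, the Laplace expansion follows by splitting the wedge power according to a chosen set of indices. Concretely, for fixed $t$ I would expand
\begin{equation*}
\wedge^n\Omega=\binom{n}{t}^{-1}_{\text{sym}}\sum \bigl(\wedge^t\Omega\bigr)\wedge\bigl(\wedge^{n-t}\Omega\bigr),
\end{equation*}
and then use \eqref{e:minor1}-type reasoning to record which monomials $x_{i_1}\wedge\cdots\wedge x_{i_{2t}}$ (with $I=\{i_1<\cdots<i_{2t}\}$) arise from the first factor and which complementary monomials arise from the second. Each partial wedge power $\wedge^t\Omega$ restricted to the subspace spanned by $\{x_i:i\in I\}$ reproduces $[t]_\lambda!\,\Pf_q(B_I)$ times $x_{i_1}\wedge\cdots\wedge x_{i_{2t}}$, and similarly for the complement $I^c$. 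The quantum factorials combine as $[t]_\lambda!\,[n-t]_\lambda!$ against the full $[n]_\lambda!$, and the multinomial coefficient $\binom{n}{t}_\lambda$ that these produce must match the number of ways the pair-partition is cut, leaving the plain sum over subsets $I$.

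The crucial sign/scalar bookkeeping is the factor $inv(I,I^c)$. It arises precisely from reordering the wedge monomial $x_{i_1}\wedge\cdots\wedge x_{i_{2t}}\wedge x_{j_1}\wedge\cdots\wedge x_{j_{2(n-t)}}$ (with $I$-indices first, then $I^c$-indices) back into the standard increasing order $x_1\wedge\cdots\wedge x_{2n}$. By the wedge relation \eqref{e:wedge1}, each transposition of an $x_i$ ($i\in I$) past an $x_j$ ($j\in I^c$) with $i>j$ contributes a factor $-q_{ji}$, and the product over all such inversions is exactly $\prod_{i\in I,\,j\in I^c,\,i>j}(-q_{ji})=inv(I,I^c)$. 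I would verify that no extra factors come from reordering within $I$ or within $I^c$, since those orderings are already accounted for inside $\Pf_q(B_I)$ and $\Pf_q(B_{I^c})$.

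The step I expect to be the main obstacle is proving the collapse identity $\wedge^n\Omega=[n]_\lambda!\,\Pf_q(B)\,x_1\wedge\cdots\wedge x_{2n}$ cleanly, because it requires showing that the coefficient of each top-form monomial, after summing over all ways of assigning pairs to wedge slots, reduces to the signed $q$-weighted sum over matchings defining $\Pf_q(B)$; controlling the $q$-factors coming from commuting the $b_{ij}$ coefficients past the $x_k$'s and from the wedge reorderings simultaneously is the delicate part. Once that identity is secured, the Laplace expansion is a matter of restricting $\Omega$ to coordinate subspaces and tracking the crossing factor $inv(I,I^c)$, which is routine by comparison with the determinant case.
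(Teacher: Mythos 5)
Your overall strategy --- form the two-form $\Omega=\sum_{i<j}b_{ij}\,x_i\wedge x_j$, compute $\wedge^n\Omega$ once directly and once as $\Omega^t\wedge\Omega^{n-t}$, and read off $inv(I,I^c)$ as the cost of reordering $x_I\wedge x_{I^c}$ into $x_1\wedge\cdots\wedge x_{2n}$ --- is exactly the paper's proof, and your analysis of the crossing factor is correct. However, the normalization you propose is wrong, and the step you yourself flag as the main obstacle would fail as stated. With the paper's definition of $\Pf_q(B)$, the index set $\Pi$ consists of \emph{all} permutations satisfying $\sigma(2i-1)<\sigma(2i)$, so each perfect matching already appears with all $n!$ orderings of its pairs. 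Expanding $\Omega^n$ literally produces one monomial $b_{\sigma(1)\sigma(2)}\cdots b_{\sigma(2n-1)\sigma(2n)}\,x_{\sigma(1)}\wedge\cdots\wedge x_{\sigma(2n)}$ for each $\sigma\in\Pi$ (all other terms vanish by a repeated index), and $x_{\sigma(1)}\wedge\cdots\wedge x_{\sigma(2n)}=(-q)_\sigma\, x_1\wedge\cdots\wedge x_{2n}$; hence the collapse identity is simply $\wedge^n\Omega=\Pf_q(B)\,x_1\wedge\cdots\wedge x_{2n}$, with no $[n]_\lambda!$. The $[n]_\lambda!$ in the determinant proof came from the relations $\omega_j\wedge\omega_i=\lambda\,\omega_i\wedge\omega_j$, which allow the $n!$ orderings to be collapsed onto a single representative; here no commutation relations among the $b_{ij}$ are assumed (only $p$-antisymmetry), so no such collapse is available and the identity $\wedge^n\Omega=[n]_\lambda!\,\Pf_q(B)\,x_1\wedge\cdots\wedge x_{2n}$ is false in general.

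For the same reason the splitting needs no symmetrization: $\wedge^n\Omega=\Omega^t\wedge\Omega^{n-t}$ holds on the nose, and $\Omega^t=\sum_{|I|=2t}\Pf_q(B_I)\,x_I$ with no $[t]_\lambda!$, so there are no quantum factorials or multinomial coefficients to cancel. Once these normalizations are corrected, your argument reduces exactly to the paper's: $x_Ix_J$ vanishes unless $J=I^c$, and $x_Ix_{I^c}=inv(I,I^c)\,x_1\wedge\cdots\wedge x_{2n}$ by \eqref{e:wedge1}, which yields the stated expansion.
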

\begin{proof}
Let $\Omega=\sum_{i<j}b_{ij}x_{i}x_{j}$, where $x_i\in\Lambda_q(x)$. Then
\begin{equation}
\bigwedge^{n}\Omega=\Pf_q(B)x_{1}\wedge\cdots\wedge x_{2n}.
\end{equation}

On the other hand,

\begin{equation}
\begin{aligned}
\bigwedge^{n}\Omega&=\Omega^t\bigwedge\Omega^{n-t}\\
&=\sum_{I,J}\Pf(B_{I})x_I\Pf(B_{I^c})x_{J}\\
&=\sum_{I,J}\Pf(B_{I})\Pf(B_{I^c})x_Ix_{J}
\end{aligned}
\end{equation}
It is easy to see that $x_Ix_{J}$ vanishes unless $J=I^c$. Therefore

\begin{align*}
&\bigwedge^{n}\Omega
=\sum_{I}\Pf(B_{I})\Pf(B_{I^c})x_Ix_{I^c}\\
&=\sum_{I}inv(I,I^c)\Pf(B_{I})\Pf(B_{I^c})x_{1}\wedge\cdots\wedge x_{2n}.
\end{align*}
Thus we conclude that
$$\Pf(B)=\sum_{I}inv(I,I^c)\Pf(B_{I})\Pf(B_{I^c}).$$
\end{proof}

\begin{theorem}
Let $B=(b_{ij})_{1\leq i,j \leq 2n}$ be the $p$-antisymmetric matrix such that $b_{ji}=-p_{ij}b_{ij}, i<j$, and assume that
 the entries of $B$ commute with those of a $(p, \lambda)$-matrix $A=(a_{ij})_{1\leq i,j\leq 2n}$. Let $C=A^{T}BA$. Then
\begin{equation}
c_{ji}=-p_{ij}c_{ij}, \quad i<j¡£
\end{equation}
and
\begin{equation}
\Pf_{q}(C)={\det}_q(A)\Pf_{q}(B).
\end{equation}
\end{theorem}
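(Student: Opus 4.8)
The plan is to reproduce the exterior-algebra mechanism of Proposition~\ref{Pfaffian-Lap}, but with the quadratic form of $B$ transported along the substitution $x\mapsto Ax$. Concretely, I would introduce $\delta_i=(Ax)_i=\sum_k a_{ik}x_k$ in $\Lambda_q(x)$, compare the two-form $\Omega_B^{\delta}=\sum_{i<j}b_{ij}\,\delta_i\wedge\delta_j$ with the two-form $\Omega_C=\sum_{k<l}c_{kl}\,x_k\wedge x_l$, and then read off the Pfaffian identity by taking $\bigwedge^{n}$. Both halves of the theorem fall out of this single comparison, but the $p$-antisymmetry of $C$ must be established first, since it is needed to apply the Proposition~\ref{Pfaffian-Lap} computation to $C$.

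For the antisymmetry I would argue directly. Since $p_{ii}=1$ forces $b_{ii}=0$, the diagonal drops out and I can write $c_{kl}=(A^{T}BA)_{kl}=\sum_{i,j}a_{ik}b_{ij}a_{jl}$, then fold the $i>j$ part onto the $i<j$ part using $b_{ji}=-p_{ij}b_{ij}$ and the hypothesis that the $b$'s commute with the $a$'s (which frees each scalar coefficient $b_{ij}$). This yields $c_{kl}=\sum_{i<j}b_{ij}\bigl(a_{ik}a_{jl}-p_{ij}a_{jk}a_{il}\bigr)$ and likewise $c_{lk}=\sum_{i<j}b_{ij}\bigl(a_{il}a_{jk}-p_{ij}a_{jl}a_{ik}\bigr)$ for $k<l$. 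The claim $c_{lk}=-p_{kl}c_{kl}$ then reduces, coefficient by coefficient in $b_{ij}$, to the identity $a_{il}a_{jk}-p_{ij}a_{jl}a_{ik}=-p_{kl}\bigl(a_{ik}a_{jl}-p_{ij}a_{jk}a_{il}\bigr)$ for $i<j$, $k<l$, which is precisely relation~\eqref{relation4} after multiplying through by $p_{kl}$ and rearranging. I expect this to be the main (though short) obstacle: it is the one place where the defining relations of a $(p,\lambda)$-matrix are genuinely used, and one must be careful to pick out the $p$-relation \eqref{relation4} rather than the $q$-relation.

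With $C$ now $p$-antisymmetric, I would set up the form comparison. The $\delta_i$ obey the wedge relations \eqref{e:wedge1}--\eqref{e:wedge2}, and by \eqref{e:minor1} in top degree $\delta_1\wedge\cdots\wedge\delta_{2n}={\det}_q(A)\,x_1\wedge\cdots\wedge x_{2n}$. Expanding $\delta_i\wedge\delta_j=\sum_{k,l}a_{ik}a_{jl}\,x_k\wedge x_l$ and commuting the $b$'s past the $a$'s gives $\sum_{i,j}b_{ij}\,\delta_i\wedge\delta_j=\sum_{k,l}\bigl(\sum_{i,j}a_{ik}b_{ij}a_{jl}\bigr)x_k\wedge x_l=\sum_{k,l}c_{kl}\,x_k\wedge x_l$. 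Now collapse each full sum to its strictly-ordered part: on the left, $\delta_j\wedge\delta_i=-q_{ij}\delta_i\wedge\delta_j$ together with $b_{ji}=-p_{ij}b_{ij}$ produces the factor $1+q_{ij}p_{ij}=1+\lambda$; on the right, $x_l\wedge x_k=-q_{kl}x_k\wedge x_l$ together with the antisymmetry $c_{lk}=-p_{kl}c_{kl}$ just proved produces the same factor $1+q_{kl}p_{kl}=1+\lambda$. Since $\lambda\neq-1$, the common factor cancels and I obtain $\Omega_C=\Omega_B^{\delta}$.

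Finally I would apply $\bigwedge^{n}$ to this equality. On the left, the computation of Proposition~\ref{Pfaffian-Lap} applied to the $p$-antisymmetric matrix $C$ gives $\bigwedge^{n}\Omega_C=\Pf_q(C)\,x_1\wedge\cdots\wedge x_{2n}$. On the right, the identical computation is valid because the $\delta_i$ satisfy the same $q$-wedge relations as the $x_i$ and the $b$'s commute past the $\delta$'s, so $\bigwedge^{n}\Omega_B^{\delta}=\Pf_q(B)\,\delta_1\wedge\cdots\wedge\delta_{2n}=\Pf_q(B)\,{\det}_q(A)\,x_1\wedge\cdots\wedge x_{2n}$. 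Comparing coefficients of $x_1\wedge\cdots\wedge x_{2n}$ yields $\Pf_q(C)=\Pf_q(B)\,{\det}_q(A)$, and since the entries of $A$ and $B$ commute, $\Pf_q(B)$ and ${\det}_q(A)$ commute, giving $\Pf_q(C)={\det}_q(A)\,\Pf_q(B)$ as claimed.
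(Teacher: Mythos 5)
Your proposal is correct and follows essentially the same route as the paper: establish the $p$-antisymmetry of $C$ from relation \eqref{relation4}, then compare $\bigwedge^{n}$ of $x^{t}Cx$ with $\bigwedge^{n}$ of $(Ax)^{t}B(Ax)$ in $\Lambda_q(x)$, using $\delta_1\wedge\cdots\wedge\delta_{2n}={\det}_q(A)\,x_1\wedge\cdots\wedge x_{2n}$. The only cosmetic difference is that you cancel the factor $1+\lambda$ at the level of two-forms, whereas the paper carries $(1+\lambda)^{n}$ through to the end.
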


\begin{proof} We first check that $c_{ij}$ also form anti-symmetric matrix. We compute that
\begin{align*}
c_{ii}&=\sum_{k,l}a_{ki}b_{kl}a_{li}=\sum_{k<l}a_{ki}b_{kl}a_{li}+a_{li}b_{lk}a_{ki}\\
&=\sum_{k<l}(a_{ki}a_{li}-p_{kl}a_{li}a_{ki})b_{kl}=0.\\
\end{align*}
For $i<j$,
\begin{align*}
c_{ij}&=\sum_{k,l}a_{ki}b_{kl}a_{lj}
=\sum_{k<l}\left(a_{ki}b_{kl}a_{lj}+a_{li}b_{lk}a_{kj}\right)\\
&=\sum_{k<l}(a_{ki}a_{lj}-p_{kl}a_{li}a_{kj})b_{kl}
=\sum_{k<l}{\det}_q(A^{kl}_{ij})b_{kl},\\
\end{align*}
\begin{align*}
c_{ji}&=\sum_{k,l}a_{kj}b_{kl}a_{li}=\sum_{k<l}(a_{kj}a_{li}-p_{kl}a_{lj}a_{ki})b_{kl}\\
&=\sum_{k<l}-p_{ij}(a_{ki}a_{lj}-p_{kl}a_{li}a_{kj})b_{kl}
=-p_{ij}\sum_{k<l}{\det}_q(A^{kl}_{ij})b_{kl}\\
&=-p_{ij}c_{ij}.\\
\end{align*}

Consider the element
$$\Omega=x^{t}Cx,$$
where we recall that $x=(x_1,\ldots,x_n)^t$ and $x_i\in \Lambda_q(x)$. Explicitly we have that
$\Omega=\sum_{1\leq i,j\leq n} c_{ij}x_{i}x_{j}=\sum_{i<j}(1+\lambda) c_{ij}x_{i}x_{j}$, therefore
\begin{equation}\label{1}
\bigwedge^{n}\Omega=(1+\lambda)^{n}\Pf_q(C)x_{1}\wedge\cdots\wedge x_{2n}.
\end{equation}

On the other hand, let $\omega_i=\sum_{j=1}^na_{ij}x_j$.
Then
\begin{align*}
\omega_j\omega_i&=-q_{ij}\omega_i\omega_j,\quad i<j, \\
\omega_i\omega_i&=0.
\end{align*}
As $Ax=(\omega_1,\ldots,\omega_{2n})^{t}$, one has that
\begin{align*}
\Omega&=x^{t}A^{t}BAx=(Ax)^{t}B(Ax)\\
&=\sum_{1\leq i,j\leq n}b_{ij}\omega_{i}\omega_{j}\\
&=\sum_{i<j}(1+\lambda)b_{ij}\omega_{i}\omega_{j}.
\end{align*}
Therefore
\begin{equation}\label{2}
\begin{aligned}
&\bigwedge^{n}\Omega=(1+\lambda)^{n}\Pf_q(B)\omega_1\wedge\cdots\wedge\omega_{2n}\\
&=(1+\lambda)^{n}\Pf_q(B){\det}_q(A)x_{1}\wedge\cdots \wedge x_{2n}
\end{aligned}
\end{equation}
Subsequently we have proved that
$$\Pf_{q}(C)={\det}_q(A)\Pf_{q}(B).$$
\end{proof}

The following column analog is clear.
\begin{remark}
Let $B$ be any matrix with entries $b_{ij},1\leq i,j \leq 2n$  commuting with $a_{ij}$ and $b_{ji}=-q_{ij}b_{ij},i<j$.
Let $C=ABA^{t}$. Then $c_{ji}=-q_{ij}c_{ij}, i<j$ and $\Pf_p(C)={\det}_q(A)\Pf_p(B)$.
\end{remark}

\section{Multiparameter quantum hyper-Pfaffians}
We now generalize the notion of the quantum multiparameter Pfaffian to the quantum hyper-Pfaffian.
A hypermatrix $A=(A_{i_1\cdots i_n})$ is an array of entries indexed by several indices, while a matrix is
indexed by two indices.
\begin{definition}
Let $B$ be a hypermatrix with noncommutative entries $b_{i_1\cdots i_{m}},1\leq i_k\leq mn,k=1,\ldots, m.$
Multiparameter quantum hyper-Pfaffian is defined by

\begin{align*}
\Pf_q(B)
=\sum_{\sigma\in \Pi}(-q)_{\sigma}b_{\sigma(1)\cdots\sigma(m)}\cdots b_{\sigma(m(n-1)+1)\cdots\sigma(mn)},
\end{align*}
Here $\Pi$ is the set of permutations $\sigma$ of $mn$ such that
$\sigma((k-1)m+1)<\sigma((k-1)m+2)<\sigma(km), k=1,\ldots,n.$
\end{definition}
Note that the multiparameter Pfaffian uses only the entries $b_{i_1\cdots i_m}$, where
$i_1<\cdots <i_m$.

Similar to Proposition {\ref{Pfaffian-Lap}}, one has the following result.
\begin{proposition}
For any $0\leq t\leq n$,
\begin{equation}
\Pf(B)=\sum_{I}inv(I,I^c)\Pf(B_{I})\Pf(B_{I^c}),
\end{equation}
where $I$ runs through subsets of $[1,mn]$ such that $|I|=mt$.
\end{proposition}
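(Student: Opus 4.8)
The plan is to run the exterior-algebra argument of Proposition~\ref{Pfaffian-Lap} one degree higher. Working in $\Lambda_q(x)$ with the $mn$ generators $x_1,\dots,x_{mn}$, whose powers we assume to commute with the entries $b_{i_1\cdots i_m}$, I would introduce the degree-$m$ form
$$\Omega=\sum_{i_1<\cdots<i_m}b_{i_1\cdots i_m}\,x_{i_1}\wedge\cdots\wedge x_{i_m}$$
and first establish the base identity
$$\bigwedge^{n}\Omega=\Pf_q(B)\,x_1\wedge\cdots\wedge x_{mn}.$$
Expanding the $n$-fold wedge, a nonzero term arises exactly by choosing one strictly increasing block $J_k=\{j^{(k)}_1<\cdots<j^{(k)}_m\}$ from each of the $n$ factors in such a way that $J_1,\dots,J_n$ are pairwise disjoint, hence partition $[1,mn]$. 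Reading the concatenation $(J_1,\dots,J_n)$ as the one-line notation of a permutation $\sigma\in\Pi$, the coefficient of such a term is $b_{\sigma(1)\cdots\sigma(m)}\cdots b_{\sigma(m(n-1)+1)\cdots\sigma(mn)}$, and the wedge $x_{J_1}\wedge\cdots\wedge x_{J_n}$ equals $(-q)_{\sigma}\,x_1\wedge\cdots\wedge x_{mn}$ by the reordering rule $x_{\sigma(1)}\wedge\cdots\wedge x_{\sigma(mn)}=(-q)_{\sigma}x_1\wedge\cdots\wedge x_{mn}$. Summing over all such choices reproduces $\Pf_q(B)$ term by term.

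The same computation, restricted to blocks drawn from a fixed subset, yields the partial identity
$$\bigwedge^{t}\Omega=\sum_{|I|=mt}\Pf_q(B_I)\,x_I,\qquad x_I=\bigwedge_{i\in I}x_i,$$
where the sum runs over $I\subseteq[1,mn]$ of size $mt$: grouping the expansion of $\bigwedge^{t}\Omega$ by the union $I=J_1\cup\cdots\cup J_t$ and applying the base identity to the index set $I$ gives $\Pf_q(B_I)x_I$. I would then split $\bigwedge^{n}\Omega=\bigl(\bigwedge^{t}\Omega\bigr)\wedge\bigl(\bigwedge^{n-t}\Omega\bigr)$ and insert both partial identities. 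Since the $b$'s commute with the $x$'s, a typical cross term is $\Pf_q(B_I)\Pf_q(B_{I'})\,x_I\wedge x_{I'}$ with $|I|=mt$ and $|I'|=m(n-t)$; because $x_I\wedge x_{I'}=0$ unless $I\cap I'=\varnothing$, only $I'=I^c$ survives. Reordering $x_I\wedge x_{I^c}$ into $x_1\wedge\cdots\wedge x_{mn}$ contributes precisely $\prod_{i\in I,\,j\in I^c,\,i>j}(-q_{ji})=inv(I,I^c)$, and comparison with the base identity gives
$$\Pf_q(B)=\sum_{|I|=mt}inv(I,I^c)\,\Pf_q(B_I)\,\Pf_q(B_{I^c}).$$

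The wedge-algebra manipulations are routine; the step that needs genuine care is the base identity, namely checking that the naive expansion of $\bigwedge^{n}\Omega$ matches $\Pf_q(B)$ with no spurious multiplicity and with exactly the weight $(-q)_{\sigma}$. No overcounting occurs because $\Omega$ ranges only over strictly increasing index tuples, so each $\sigma\in\Pi$ arises from a unique ordered block partition; and the weight is correct because the $q$-inversion is multiplicative under the merging of ordered blocks, so that the nested reorderings used in the partial identity and in the final $x_I\wedge x_{I^c}$ step compose to the single factor $(-q)_{\sigma}$. Verifying this multiplicativity explicitly — that sorting each block internally and then interleaving the blocks accumulates exactly the $q_{ij}$ prescribed by $(-q)_{\sigma}$ and by $inv(I,I^c)$ — is the one point I would treat with care, after which the statement follows as above.
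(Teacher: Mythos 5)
Your proposal is correct and follows essentially the same route as the paper: both introduce the degree-$m$ form $\Omega=\sum_{i_1<\cdots<i_m}b_{i_1\cdots i_m}x_{i_1}\wedge\cdots\wedge x_{i_m}$, identify $\bigwedge^{n}\Omega$ with $\Pf_q(B)\,x_1\wedge\cdots\wedge x_{mn}$, and compare with the factorization $\bigwedge^{t}\Omega\wedge\bigwedge^{n-t}\Omega$, in which only complementary index sets survive and the reordering produces $inv(I,I^c)$. You simply spell out the multiplicativity of the $q$-inversion under merging of ordered blocks, which the paper leaves implicit (and you correct its typo $x_{2n}$ to $x_{mn}$).
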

\begin{proof}

Let $\Omega=\sum_{i_1<\cdots <i_{m}}b_{i_1\cdots i_{m}}x_{i_1}\wedge\cdots \wedge x_{i_m}$,
then one has that
\begin{equation}\label{11}
\bigwedge^{n}\Omega=\Pf_q(B)x_{1}\wedge\cdots\wedge x_{2n}.
\end{equation}
and
\begin{equation}\label{22}
\bigwedge^{n}\Omega
=\Omega^t\bigwedge\Omega^{n-t}
=\sum_{I,J}\Pf(B_{I})\Pf(B_{I^c})x_Ix_{J},
\end{equation}
where as usual we have put $x_I=x_{i_1}\wedge\cdots\wedge x_{i_m}$.
Comparing \eqref{11} and \eqref{22}, one has the statement.
\end{proof}

\begin{theorem}
Let $B=(b_{i_1\cdots i_m })_{1\leq i,j \leq n}$ be any hypermatrix with noncommutative entries commuting  
with those of the matrix $A=(a_{ij})$. Let
$$c_{I}=\sum_{J}{\det}_q(A^J_I) b_J,$$
then $\Pf_{q}(C)={\det}_q(A)\Pf_{q}(B)$.
\end{theorem}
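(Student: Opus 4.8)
The plan is to mimic exactly the proof of the matrix Pfaffian transformation rule (the theorem in Section 4), but working in the $q$-exterior algebra $\Lambda_q(x)$ with $m$-fold wedge products playing the role that two-fold products played before. The central object is the degree-$m$ element $\Omega=\sum_{i_1<\cdots<i_m} b_{i_1\cdots i_m}\, x_{i_1}\wedge\cdots\wedge x_{i_m}$, whose $n$-th wedge power computes $\Pf_q(B)$ by equation \eqref{11}. I would introduce the transformed forms $\omega_i=\sum_{j=1}^{mn} a_{ij}x_j$, which, since $A$ is a $(p,\lambda)$-matrix and $\Lambda_q(x)$ is the $q$-exterior algebra, satisfy the same wedge relations $\omega_j\wedge\omega_i=-q_{ij}\,\omega_i\wedge\omega_j$ and $\omega_i\wedge\omega_i=0$ as the $x_i$ — this is precisely the computation already carried out in the Section~4 proof and may be quoted.

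First I would form the substituted element $\Omega'=\sum_{i_1<\cdots<i_m} b_{i_1\cdots i_m}\,\omega_{i_1}\wedge\cdots\wedge\omega_{i_m}$ and compute its $n$-th wedge power in two different ways. On one side, because the $\omega_i$ obey the $q$-wedge relations, the identical bookkeeping that gave \eqref{11} now yields
\begin{equation*}
\bigwedge^{n}\Omega'=\Pf_q(B)\,\omega_1\wedge\cdots\wedge\omega_{mn}=\Pf_q(B)\,{\det}_q(A)\,x_1\wedge\cdots\wedge x_{mn},
\end{equation*}
where the second equality uses the defining coaction property $\omega_1\wedge\cdots\wedge\omega_{mn}={\det}_q(A)\,x_1\wedge\cdots\wedge x_{mn}$. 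On the other side I would expand each $\omega_{i_1}\wedge\cdots\wedge\omega_{i_m}$ back in terms of the $x_J$ via the minor expansion \eqref{e:minor1}, namely $\omega_{i_1}\wedge\cdots\wedge\omega_{i_m}=\sum_{J}{\det}_q(A^{I}_{J})\,x_J$; collecting the coefficient of each $x_J$ regroups the sum into $\Omega'=\sum_J c_J\,x_J$ with $c_J=\sum_I {\det}_q(A^{J}_{I})\,b_I$, which is exactly the hypermatrix $C$ in the statement. Hence $\bigwedge^n\Omega'=\Pf_q(C)\,x_1\wedge\cdots\wedge x_{mn}$, and comparing the two evaluations gives $\Pf_q(C)={\det}_q(A)\,\Pf_q(B)$.

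The step I expect to be the main obstacle is the bookkeeping of the $q$-signs when identifying $\Omega'$ with $\sum_J c_J x_J$, i.e. verifying that the minor expansion \eqref{e:minor1} applies verbatim with $m$ indices rather than the single-column case in which it was stated, and that no stray scalar from reordering the wedge factors is lost. The commutativity hypothesis, that the $b_I$ commute with all $a_{ij}$, is what lets the scalars ${\det}_q(A^J_I)$ and the entries $b_I$ be freely moved past one another so that $c_J=\sum_I {\det}_q(A^J_I)b_I$ matches the stated definition; I would flag that this is the only place the commutativity assumption is used. A minor point to confirm is that the indexing convention $c_I=\sum_J {\det}_q(A^J_I)b_J$ in the theorem agrees, after the obvious relabelling of dummy indices, with the coefficient of $x_I$ produced by the wedge expansion, so that the two definitions of $C$ coincide.
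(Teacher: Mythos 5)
Your proposal is correct and follows essentially the same route as the paper: the paper's proof sets $\delta_i=\sum_j a_{ij}x_j$, observes that $\Omega=\sum_I c_I x_I$ equals $\sum_J b_J\delta_J$, and evaluates $\bigwedge^n\Omega$ both as $\Pf_q(C)\,x_1\wedge\cdots\wedge x_{mn}$ and as $\Pf_q(B)\,{\det}_q(A)\,x_1\wedge\cdots\wedge x_{mn}$, exactly the two computations you describe. The point you flag as a possible obstacle is already covered, since the minor expansion \eqref{e:minor1} is stated in the paper for an arbitrary number $t$ of indices, so it applies verbatim with $t=m$.
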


\begin{proof}
Let $\delta_i=\sum_{j=1}^{mn}a_{ij}x_{j}$, and consider the element $\Omega=\sum c_{I}x_{I}$. It is clear that
\begin{equation}\label{hyperpf1}\Omega^n=\Pf_{q}(C)x_1\wedge\cdots\wedge x_{2n}.
\end{equation}

On the other hand,
$\Omega=\sum b_{J}\delta_{J}$. Then
\begin{equation}\label{hyperpf2}
\Omega^n=\Pf(B)\delta_1\wedge\cdots\wedge \delta_{2n}=\Pf(B){\det}_q(A)x_1\wedge\cdots\wedge x_{2n}.
\end{equation}

Comparing (\ref{hyperpf1}) and (\ref{hyperpf2}) we conclude that
$$\Pf_{q}(C)={\det}_q(A)\Pf_{q}(B).$$
\end{proof}

\begin{remark}
The column-analog is also true. In fact, one has the following result.
Let $B=(b_{i_1\cdots i_m })_{1\leq i,j \leq n}$ be any hypermatrix with noncommutative entries commuting
with those of the matrix $A=(a_{ij})$. Let
$$c_{I}=\sum_{J}{\det}_q(A^I_J) b_J,$$
then $\Pf_{p}(C)={\det}_q(A)\Pf_{p}(B)$.

\end{remark}

\bigskip
\centerline{\bf Acknowledgments}
\medskip
The work is supported by National Natural Science Foundation of China (
11531004), Fapesp (2015/05927-0) and Humboldt foundation.
Jing 
acknowledges the support of
Max-Planck Institute for Mathematics in the Sciences, Leipzig.
Both authors also thank South China University of Technology for support during the work.

\vskip 0.1in

\bibliographystyle{amsalpha}

\end{document}